\newtheorem{theorem}{Theorem}[section]
\newtheorem{proposition}[theorem]{Proposition}
\newtheorem{corollary}[theorem]{Corollary}
\theoremstyle{definition}
\theoremstyle{approach}
\numberwithin{equation}{section}
\begin{document}
	\setcounter{page}{1}
	
	\title{BSE-properties of Vector-valued group algebras}
	
	\author[Mitra Amiri and Ali {Rejali}$^*$]{Mitra Amiri and Ali {Rejali}$^*$}
	
	\thanks{2020 Mathematics Subject Classification. 46J05}
	\thanks{* Corresponding auther}
	
	\keywords{BSE-algebra, $L^1$-algebra, multiplier algebra, vector-valued function.}
	
\begin{abstract}
Let $ \mathcal{A} $ be a commutative and semisimple Banach algebra with identity norm one and $ G $ be an abelian locally compact Hausdorff group. In this paper, we study BSE-Property for $L^1(G,\mathcal A)$ and show that $L^1(G,\mathcal A)$ is a BSE algebra if and only if $\mathcal A$ is so.
\end{abstract}

\maketitle

\section{Introduction and preliminaries}

Let $G$ be an abelian locally compact Hausdorff group and $\mathcal A$
be a commutative Banach algebra. It is shown in \cite{AmRe}, the conditions that 
$L^p(G,\mathcal A)$ $(1<p<\infty)$ is a BSE-algebra. By \cite[Proposition 1.5.4.]{K},
$L^1 (G,\mathcal A)$ is isometrically isomorphic with the projective tensor product $L^1(G)$ and $\mathcal A$, denoted by $L^1(G)\widehat{\otimes}\mathcal A$.
It follows that $L^1 (G,\mathcal A)$ is always a Banach algebra under convolution product. We verify the BSE property for $L^1(G,\mathcal A)$.

The acronym "BSE" stands for Bochner-Schoenberg-Eberlein and refers to a famous
theorem, for the additive group of real numbers, proved by
Bochner and Schoenberg \cite{Boch,SHo}. Then the result was generalized by Eberlein
\cite{Ebe}, for an abelian locally compact group $G$, which is indicating the
BSE-property of the group algebra $L^1(G)$ \cite{Rud}.
This result, has led Takahashi and Hatori \cite{Tak1} to introduce the BSE-property for any
commutative and without order Banach algebra $\mathcal A$. 

Let $(\mathcal A,\|\cdot\|_{\mathcal A})$ be a without order
commutative Banach algebra, in the sense that $a{\mathcal A}=\{0\}$
implies $a=0$ $(a\in {\mathcal A})$. We denote by
$\Delta({\mathcal A})$, the space consisting of all nonzero multiplicative linear
functionals on $\mathcal A$, called the character space of $\mathcal A$.
Throughout the paper, we assume that $\Delta({\mathcal A})$ is nonempty.
It should be noted that $\Delta(\mathcal A)$, equipped with the weak$^*$topology,
inherited from ${\mathcal A}^*$, is a locally compact Hausdorff space.
We denote by $C_b(\Delta(\mathcal A))$ the space consisting of all bounded and continuous
functions on $\Delta(\mathcal A)$.
A bounded net $\{e_{\alpha}\}_{\alpha\in I}$ in $\mathcal A$, is called a bounded
$\Delta$-weak approximate identity for $\mathcal A$ if
$\lim_\alpha\varphi(ae_\alpha)=\varphi(a),$ for all $a\in\mathcal A$ and $\varphi\in\Delta(\mathcal A)$;
see \cite{JL}. Following \cite{Larsen}, a bounded linear operator
$T$ on $\mathcal A$, satisfying $T(ab)=aT(b)$, $(a,b\in \mathcal A)$,
is called a multiplier. The set of all multipliers on $\mathcal A$ is
denoted by $M({\mathcal A})$, which is a unital commutative Banach
algebra, called the multiplier algebra of ${\mathcal A}$.
By \cite[Theorem 1.2.2]{Larsen}, for any $T\in M({\mathcal A})$,
there exists a unique function $\widehat{T}\in C_b(\Delta(\mathcal A))$
such that $$\widehat{T(a)}(\varphi)=\widehat{T}(\varphi)\widehat{a}(\varphi),$$ for
all $a\in {\mathcal A}$ and $\varphi\in \Delta({\mathcal A})$.
A bounded and continuous function
$\sigma$ on $\Delta({\mathcal A})$ is called a BSE-function if
there exists a constant $\beta>0$ such that following the
inequality holds:
\begin{equation}\label{e15}
\left\vert \displaystyle\sum_{k=1}^n\alpha_k\sigma(\varphi_k)\right\vert\leq \beta\;\left\Vert
\displaystyle\sum_{k=1}^n\alpha_k\varphi_k\right\Vert_{{\mathcal
		A}^*},
\end{equation}
for any finite number of  $\alpha_1,...,\alpha_n\in\Bbb C$ and the same number of
$\varphi_1,...,\varphi_n\in \Delta({\mathcal A})$.
The BSE-norm of $\sigma$ is the infimum of all $\beta$, which satisfying \eqref{e15}
and will be denoted by $\Vert\sigma\Vert_{BSE}$. By \cite{Tak1}, $C_{BSE}(\Delta({\mathcal A}))$,
the space consisting of all BSE-functions on $\Delta(\mathcal A)$, is a commutative and semisimple
Banach algebra, equipped with the norm $\Vert\cdot\Vert_{BSE}$ and pointwise product.
Then ${\mathcal A}$ is called a BSE-algebra (or has the BSE-property)
$\widehat{M({\mathcal A})}=C_{BSE}(\Delta({\mathcal A})),$ where
$\widehat{M({\mathcal A})}=\{\widehat{T}:T\in M({\mathcal
	A})\}.$ By \cite[Corollary 5]{Tak1},
$\widehat{{M}({\mathcal A})}\subseteq  C_{BSE}(\Delta({\mathcal A})),$
if and only if ${\mathcal A}$ has a bounded
$\Delta$-weak approximate identity. It follows that all BSE-algebras possesses a bounded $\Delta$-weak
approximate identity. Moreover, $\mathcal A$ is called a BSE-algebra of type $I$
if
$$
C_b(\Delta({\mathcal A}))=C_{BSE}(\Delta({\mathcal A}))=\widehat{M({\mathcal A})}.
$$
The Gelfand mapping $\Gamma_{\mathcal A}: \mathcal A\rightarrow C_b(\Delta(\mathcal A))$ is defined by
$a\mapsto\widehat{a},$ such that $\widehat{a}$ is the Gelfand
transform of $a$. The Banach algebra  $\mathcal A$
is called semisimple if $ker(\Gamma_{\mathcal A})=\{0\}.$
It should be noted that we always have $\widehat{\mathcal A}\subseteq
C_{BSE}(\Delta({\mathcal A}))$ and for any
$a\in\mathcal A$,
$$\|\widehat{a}\|_\infty\leq\|\widehat{a}\|_{BSE}\leq\|a\|_{\mathcal A},$$
where $\widehat{\mathcal A}$ is the range
of Gelfand mapping of $\mathcal A$. Let
${\mathcal M}({\mathcal A})$ is the normed algebra, consisting of all $\Phi\in C_b(\Delta(\mathcal A))$ such that $\Phi\cdot\widehat{\mathcal A}\subseteq\widehat{\mathcal A}.$
If $\mathcal A$ is semisimple then $\widehat{M({\mathcal
		A})}={\mathcal M}({\mathcal A})$ \cite[p. 30]{Larsen}. Consequently, a semisimple and
commutative Banach algebra $\mathcal A$ is a BSE-algebra if
$C_{BSE}(\Delta({\mathcal A}))=\mathcal{M}({\mathcal A}).$ The interested reader is referred to \cite{AKT},
\cite{Dales}, \cite{IT1}, \cite{I}, \cite{Kan}, \cite{K},
\cite{Tak2}, and \cite{Tak3}. In this paper, we study this property for $L^1(G,\mathcal A)$ and show that $L^1(G,\mathcal A)$ is a BSE algebra,
if and only if $\mathcal A$ is so.

\section{\bf Some basic results}

Let $(\mathcal A,\|\cdot\|_{\mathcal A}) $ be a Banach algebra,  $G$ be a locally compact Hausdorff group with a left Haar measure $\lambda$ and if $1\leq p<\infty $. in the case where
$1<p<\infty$, we assume in addition that $\mathcal A$ is separable. A function $f: G\longrightarrow\mathcal A$ is called strongly measurable, if $f$ is Borel measurable and also $f(G)$ is separable in $\mathcal A$. Thus in the case where $\mathcal A$ is separable, the concept of measurability and strong measurability are equivalent. Most of the properties of integral theory in the complex version, are also valid for the vector-valued case. We refer to \cite{D}, as a complete survey in this issue.

For any Borel measurable function $f: G\rightarrow\mathcal A$, let
$$\lVert f \rVert_{p,\mathcal A}=\left(\int_{G} \lVert f(x) \rVert_{\mathcal A}^{p} d\lambda(x )\right)^{\frac{1}{p}}.$$
Then $L^p (G,\mathcal A)$ is the Banach space, consisting of all Borel measurable functions $f: G \longrightarrow \mathcal A$ such that $\|f \Vert_{p,\mathcal A}<\infty$. For each $ f \in L^p (G,\mathcal A) $, define $ \overline{f}(x)= \Vert f(x) \Vert_{\mathcal A} $, for all $ x \in G $. Then $ f \in L^p (G,\mathcal A) $ if and only if $ \overline{f} \in L^p (G) $.
In this case, $\|\overline{f}\|_{p}=\|f\|_{p,\mathcal A}$. Recall that
for measurable vector-valued functions $f,g: G\longrightarrow\mathcal A$, the convolution multiplication
$$
(f *g)(x) =\int_Gf(y) g(y^{-1}x) d\lambda(y),
$$
is defined at each point $x \in G$ for which this makes sense.

Note that by a famous conjecture, proved by Saeki \cite{S}, $L^p(G)$ is closed under convolution product if and only if $G$ is compact.

Let $\widehat{G}$ be dual group of $G$, introduced in \cite{Rud}. By \cite[Proposition 1.5.4]{K}, $L^1(G,\mathcal A)$ is
isomorphic with the projective tensor product $L^1(G)$ and $\mathcal A$.
Moreover, by \cite[Theorem 2.11.2]{K}, the character space of $L^1(G,\mathcal A)$ is
homeomorphic with $\widehat{G}\times\Delta(\mathcal A)$, such that for all $\chi\in\widehat{G}$ and $\varphi\in\Delta(\mathcal A)$,
$$
\chi\otimes\varphi:L^1(G)\widehat{\otimes}\mathcal A\longrightarrow\Bbb C,
$$
defined as, $$(\chi\otimes\varphi)(f\otimes a)=\chi(f)\varphi(a)\;\;\;\;(f\in L^1(G),a\in\mathcal A).$$
Moreover, for all $f\in L^1(G,\mathcal A)$, we have
$$
(\chi\otimes\varphi)(f)=\varphi\left(\int_G\overline{\chi(x)}f(x)d\lambda(x)\right)=
\int_G\overline{\chi(x)}\varphi(f(x))d\lambda(x).
$$
Now Corollary \cite[Corollary 2.6]{AmRe} together with \cite[Lemma 2.2]{ANR} yield the next result.

\begin{proposition}
	Let $\mathcal A$ be a separable commutative Banach algebra, $G$ be
	an abelian compact Hausdorff group and $1\leq p<\infty$. Then
	$$
	\Delta(L^p(G,\mathcal A))=\{(\chi\otimes\varphi)|_{L^p(G,\mathcal A)}:\;\;\chi\in\widehat{G},\varphi\in\Delta(\mathcal A)\}.
	$$
\end{proposition}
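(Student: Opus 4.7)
The plan is to establish the two inclusions of the claimed equality, leaning on the two cited results. The key structural fact from the preceding discussion is that, since $G$ is compact, $L^p(G,\mathcal A)$ embeds continuously into $L^1(G,\mathcal A)$, and the convolution products on these spaces are compatible (Saeki's theorem, recalled above, even ensures $L^p(G)$ itself is closed under convolution precisely when $G$ is compact). So the $L^p$-algebra sits as a subalgebra of the $L^1$-algebra.

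For the inclusion $\supseteq$, I would fix $\chi\in\widehat G$ and $\varphi\in\Delta(\mathcal A)$. By Theorem 2.11.2 of [K] recalled just above, $\chi\otimes\varphi$ is a character of $L^1(G,\mathcal A)$; its restriction to the subalgebra $L^p(G,\mathcal A)$ is therefore still a bounded multiplicative linear functional. Nonvanishing is witnessed by any elementary tensor $f\otimes a$ with $\chi(f)\varphi(a)\neq 0$, and such tensors lie in $L^p(G,\mathcal A)$ because continuous functions on the compact group $G$ belong to $L^p(G)$ for every $p<\infty$.

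For the inclusion $\subseteq$, I would take an arbitrary $\Psi\in\Delta(L^p(G,\mathcal A))$ and apply the two cited results in sequence. Lemma 2.2 of [ANR] should provide the tensorial decomposition, showing that any character of a vector-valued $L^p$-algebra factors as $\psi_1\otimes\psi_2$ with $\psi_1\in\Delta(L^p(G))$ and $\psi_2\in\Delta(\mathcal A)$; this is where the separability hypothesis on $\mathcal A$ is used, to make strong measurability coincide with Borel measurability and to identify the tensor structure coherently. Then Corollary 2.6 of [AmRe] identifies $\Delta(L^p(G))$ with $\widehat G$ in the compact scalar case, so $\psi_1=\chi$ for some $\chi\in\widehat G$. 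Combining these yields $\Psi=(\chi\otimes\varphi)|_{L^p(G,\mathcal A)}$.

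The main obstacle, were one to prove this without the citations, would be precisely the decomposition step in the reverse inclusion. For $p=1$ it is immediate from the projective tensor product identification $L^1(G,\mathcal A)\cong L^1(G)\widehat\otimes\mathcal A$, but for $1<p<\infty$ the space $L^p(G,\mathcal A)$ is not a projective tensor product, so one must exploit the subalgebra structure inside $L^1(G,\mathcal A)$ (via compactness of $G$) together with a density or extension argument to recover the tensor form of $\Psi$. This is exactly the work that [ANR, Lemma 2.2] does.
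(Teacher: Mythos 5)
Your two--inclusion argument, resting on the embedding of $L^p(G,\mathcal A)$ into $L^1(G,\mathcal A)$ for compact $G$, is mathematically sound and is essentially the route the paper intends (the paper itself records no proof beyond the sentence citing the two results). One correction on how the citations divide the labor: \cite[Lemma 2.2]{ANR} is the abstract Segal algebra result --- if $B$ is an abstract Segal algebra in $A$, then $\Delta(B)=\{\varphi|_{B}:\varphi\in\Delta(A)\}$ via restriction --- not a tensor factorization of characters of the vector-valued $L^p$-algebra, as you describe it. The role of \cite[Corollary 2.6]{AmRe} is to verify that $L^p(G,\mathcal A)$ is such a Segal algebra in $L^1(G,\mathcal A)$ (this is where compactness of $G$ and separability of $\mathcal A$ enter), after which the reverse inclusion follows in a single step from \cite[Theorem 2.11.2]{K}, which identifies $\Delta(L^1(G,\mathcal A))$ with $\widehat G\times\Delta(\mathcal A)$; there is no need to separately identify $\Delta(L^p(G))$ with $\widehat G$ and then reassemble the tensor form. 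Your forward inclusion (restricting $\chi\otimes\varphi$ and witnessing nonvanishing on an elementary tensor with continuous first factor) is fine as written, and your closing remark correctly locates the real content in the decomposition step; it is just that the decomposition is obtained by extension of characters along the Segal inclusion rather than by a direct tensor-product analysis of $L^p(G,\mathcal A)$.
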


\section{\bf $L^1(G,\mathcal A)$ as a BSE-algebra}

Let $G$ be an abelian locally compact group and $\mathcal A$
be a commutative Banach algebra. In this section, we verify the BSE property for $L^1(G,\mathcal A)$.
By Corollary 2.11.3 and Theorem 2.11.8 of \cite{K}, $L^1(G,\mathcal A)$ is semisimple if and only if $\mathcal A$ is so.
Now we provide a necessary and sufficient for $L^1(G,\mathcal A)$ to be without order. First, we define some
important linear continuous maps as follows. For any $\varphi\in L^\infty(G)$ and $g\in {\mathcal A}^*$,
define $\theta_\varphi: L^1(G,\mathcal A)\rightarrow\mathcal A$ and
$\theta_g: L^1(G,\mathcal A)\rightarrow L^1(G)$
by
$$\theta_\varphi\left(\sum_{n=1}^{\infty}f_n\otimes a_n\right)=\sum_{n=1}^{\infty}\varphi(f_n)a_n
\;\;\text{and}\;\;\theta_g\left(\sum_{n=1}^{\infty}f_n\otimes a_n\right)=\sum_{n=1}^{\infty}f_ng(a_n).$$ It is easily verified that  $\|\theta_\varphi\|\leq\|\varphi\|_\infty$ and $\|\theta_g\|\leq\|g\|_{{\mathcal A}^*}.$

\begin{theorem}
	Let $\mathcal A $ be a commutative and semisimple Banach algebra and
	$G$ be an abelian locally compact Hausdorff group. Then $L^1(G,\mathcal A)$
	is without order if and only if $\mathcal A$ is so.
\end{theorem}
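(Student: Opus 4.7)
The plan is to prove the two implications separately, exploiting the tensor-product identification $L^1(G,\mathcal A)\cong L^1(G)\widehat{\otimes}\mathcal A$ and the description $\Delta(L^1(G,\mathcal A))\cong \widehat G\times\Delta(\mathcal A)$ recalled in Section~2, together with the two homomorphism-like maps $\theta_\varphi$ and $\theta_g$ that the authors have just introduced.

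For the forward direction I would argue contrapositively. If $\mathcal A$ fails to be without order, pick a nonzero $a\in\mathcal A$ with $a\mathcal A=\{0\}$ and any nonzero $f\in L^1(G)$; the elementary tensor $F:=f\otimes a$ is a nonzero element of $L^1(G,\mathcal A)$ (its norm in the projective tensor product is $\|f\|_1\|a\|_{\mathcal A}\neq 0$), yet for every elementary tensor $g\otimes b$ one has $(f\otimes a)*(g\otimes b)=(f*g)\otimes(ab)=0$. Bilinearity, density of the span of elementary tensors in $L^1(G)\widehat{\otimes}\mathcal A$, and continuity of convolution then extend this to $F*L^1(G,\mathcal A)=\{0\}$, showing that $L^1(G,\mathcal A)$ is not without order.

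For the converse, assume $\mathcal A$ is without order and let $F\in L^1(G,\mathcal A)$ satisfy $F*L^1(G,\mathcal A)=\{0\}$. Given any $\varphi\in\Delta(\mathcal A)\subseteq\mathcal A^*$, the operator $\theta_\varphi\colon L^1(G,\mathcal A)\to L^1(G)$ is a continuous algebra homomorphism, as one checks on elementary tensors using $\varphi(ab)=\varphi(a)\varphi(b)$ and the definition of convolution. Applying it to the hypothesis yields $\theta_\varphi(F)*\theta_\varphi(H)=0$ for every $H$; choosing $H=h\otimes a_0$ with $\varphi(a_0)\neq 0$ and letting $h$ range over $L^1(G)$ gives $\theta_\varphi(F)*L^1(G)=\{0\}$, so $\theta_\varphi(F)=0$ by the classical without-order property of $L^1(G)$. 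Hence $\varphi(F(x))=0$ for almost every $x\in G$, for every $\varphi\in\Delta(\mathcal A)$.

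The main obstacle is the passage from this pointwise-a.e.\ vanishing, where the null set depends on $\varphi$ and $\Delta(\mathcal A)$ may well be uncountable, to $F=0$. I would bypass it by invoking the semisimplicity of $L^1(G,\mathcal A)$ itself, which is inherited from $\mathcal A$ via Corollary~2.11.3 and Theorem~2.11.8 of \cite{K}. Indeed, $F*L^1(G,\mathcal A)=\{0\}$ reads, under the Gelfand transform on $\Delta(L^1(G,\mathcal A))=\widehat G\times\Delta(\mathcal A)$, as $\widehat F(\chi,\varphi)\widehat H(\chi,\varphi)=0$ for every $H$ and every $(\chi,\varphi)$; picking $H=h\otimes a$ with $\chi(h)\neq 0$ and $\varphi(a)\neq 0$ forces $\widehat F(\chi,\varphi)=0$ at each $(\chi,\varphi)$, and semisimplicity of $L^1(G,\mathcal A)$ then yields $F=0$.
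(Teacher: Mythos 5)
Your proof is correct, and the converse direction takes a genuinely different route from the paper's. For the implication ``$L^1(G,\mathcal A)$ without order $\Rightarrow$ $\mathcal A$ without order'' you argue by contraposition, but the substance is identical to the paper's direct argument: both reduce to the identity $(f\otimes a)*(g\otimes b)=(f*g)\otimes(ab)$ on elementary tensors. The real divergence is in the converse. The paper works at the level of the projective tensor product: it expands $u=\sum_n f_n\otimes a_n$, pushes the hypothesis through the maps $\theta_\varphi$ ($\varphi\in L^\infty(G)$) and $\theta_g$ ($g\in\mathcal A^*$), and concludes $u=0$ from the vanishing of all the pairings $u(\varphi,g)=\sum_n\varphi(f_n)g(a_n)$ --- a step that tacitly uses the injectivity of the canonical map from $L^1(G)\widehat{\otimes}\mathcal A$ into bilinear forms on $L^\infty(G)\times\mathcal A^*$ (which does hold, since $L^1(G)$ has the approximation property, but is never justified there). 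You instead invoke the semisimplicity of $L^1(G,\mathcal A)$, inherited from that of $\mathcal A$ via the results of Kaniuth already cited in the paper, and kill $F$ through its Gelfand transform: $\widehat F\cdot\widehat H\equiv 0$ for all $H$ forces $\widehat F\equiv 0$ since every character is nonzero somewhere, hence $F=0$. This is shorter, avoids the duality subtlety entirely, and in fact shows that under the standing semisimplicity hypothesis the ``without order'' assumption on $\mathcal A$ is automatic in this direction (a semisimple commutative Banach algebra with nonempty character space is always without order) --- which also means your intermediate paragraph with $\theta_\varphi$ for $\varphi\in\Delta(\mathcal A)$, whose a.e.-null-set difficulty you correctly flag, can simply be deleted. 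The only cost of your route is that it leans on the cited semisimplicity transfer theorem, whereas the paper's computation, had its last step been justified, would work for a merely without-order $\mathcal A$.
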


\begin{proof}
	Suppose that $L^1(G,\mathcal A)$ is without order and $a\in\mathcal A$ is nonzero.
	We show that there is $c\in\mathcal A$ such that $ac\neq 0$.
	Take $f\in L^1(G)$ to be nonzero. Thus $f\otimes a\neq 0$ and so by the hypothesis there exists
	$u\in L^1(G,\mathcal A)$ as
	$$
	u=\sum_{n=1}^\infty g_n\otimes b_n,
	$$
	with $\sum_{n=1}^\infty \|g_n\|_1\|b_n\|<\infty$  such that $(f\otimes a)u\neq 0$.  Consequently
	there exists $n_0\in \Bbb N$ such that $f*g_n\otimes ab_{n_0}\neq 0,$ which implies that  $ab_{n_0}\neq 0$.
	
	Conversely, let $u\in L^1(G,\mathcal A)$ such that $uv=0$, for all $v\in L^1(G,\mathcal A)$.
	We prove that $u=0$. Thus $u=\sum_{n=1}^\infty f_n\otimes a_n$ with $\sum_{n=1}^\infty \|f_n\|_1\|a_n\|<\infty$.
	By the hypothesis, for all $f\otimes a\in L^1(G,\mathcal A)$ we have
	$$
	u(f\otimes a)=\sum_{n=1}^\infty f_n*f\otimes a_na=0.
	$$
	It follows that  $\theta_{\varphi}(fu)a=0$, for all $\varphi\in L^\infty(G)$ and
	$a\in \mathcal A$. Since $\mathcal A$ is without order, we obtain
	$$\theta_{\varphi}(fu)=\sum_{n=1}^\infty \varphi(f*f_n)a_n=0,$$
	for all $f\in L^1(G)$. Similarly, by the fact that $L^1(G)$ is without order, we obtain for all $g\in {\mathcal A}^*$
	and $a\in\mathcal A$  that
	$$\theta_{g}(ua)=\sum_{n=1}^\infty f_ng(a_na)=0.$$
	It follows that for all $f\in L^1(G)$, $a\in\mathcal A$, $\varphi\in L^\infty(G)$ and $g\in {\mathcal A}^{*}$
	$$\sum_{n=1}^\infty \varphi(f*f_n)g(a_n)=\sum_{n=1}^\infty \varphi(f_n)g(a_na)=0.$$
	Consequently,  for all $\varphi\in L^\infty(G)$ and $g\in {\mathcal A}^{*}$,
	we obtain
	$$fu(\varphi,g)=ua(\varphi,g)=0,$$
	which implies that $fu=ua=0$.
	Thus for all $g\in {\mathcal A}^{*}$,
	$$0=\theta_{g}(fu)=\sum_{n=1}^{\infty} f*f_n g(a_n)=f\left(\sum_{n=1}^{\infty} f_n g(a_n)\right)$$
	and since $L^1(G)$ is without order, we get $$\sum_{n=1}^{\infty}f_ng(a_n)=0.$$
	Thus for all $\varphi\in L^\infty(G)$ we have
	$$\varphi\left(\sum_{n=1}^{\infty}f_ng(a_n)\right)=0=\sum_{n=1}^{\infty}\varphi(f_n)g(a_n).$$
	It follows that
	$$u(\varphi,g)=\sum_{n=1}^{\infty}\varphi(f_n)g(a_n)=0\;\;\;\;\;\;(\varphi\in L^\infty(G),g\in {\mathcal A}^*),$$
	and so $u=0$, as claimed.
\end{proof}

\begin{proposition}\label{p100}
	Let $\mathcal A $ be a commutative and semisimple Banach algebra and
	$G$ be an abelian locally compact Hausdorff group. Then $L^1(G,\mathcal A)$ has a bounded
	$\Delta-$weak approximate identity if and only if $\mathcal A$ has.
\end{proposition}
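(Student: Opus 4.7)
The plan is to exploit the identification $\Delta(L^1(G,\mathcal A))=\widehat G\times\Delta(\mathcal A)$ recorded in Proposition 2.1 together with the general observation that, for any commutative Banach algebra $B$ with a bounded $\Delta$-weak approximate identity $\{e_\alpha\}$ and every $\chi\in\Delta(B)$, one automatically has $\chi(e_\alpha)\to 1$. Indeed, multiplicativity of $\chi$ gives $\chi(be_\alpha)=\chi(b)\chi(e_\alpha)$, so the defining convergence $\chi(be_\alpha)\to\chi(b)$ forces $\chi(e_\alpha)\to 1$ as soon as one picks some $b$ with $\chi(b)\ne 0$ (possible since $\chi\neq 0$). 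This single remark reduces both implications to producing a bounded net whose image under each character is asymptotically $1$.

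For the forward implication, assume $\{e_\alpha\}$ is a bounded $\Delta$-weak approximate identity for $L^1(G,\mathcal A)$ with bound $M$. Fix any $\chi_0\in\widehat G$ (for instance the trivial character) and set $\tilde e_\alpha:=\theta_{\overline{\chi_0}}(e_\alpha)\in\mathcal A$. Since $\|\overline{\chi_0}\|_\infty=1$, the estimate $\|\theta_\varphi\|\leq\|\varphi\|_\infty$ already recorded gives $\|\tilde e_\alpha\|\le M$. I would then verify, first on elementary tensors and then by continuity on all of $L^1(G,\mathcal A)$, the identity
\[
\psi\circ\theta_{\overline{\chi_0}}=\chi_0\otimes\psi\qquad(\psi\in\Delta(\mathcal A)).
\]
Applying the preliminary observation to the character $\chi_0\otimes\psi$ yields $\psi(\tilde e_\alpha)=(\chi_0\otimes\psi)(e_\alpha)\to 1$, and then multiplicativity of $\psi$ gives $\psi(a\tilde e_\alpha)=\psi(a)\psi(\tilde e_\alpha)\to\psi(a)$ for every $a\in\mathcal A$, exhibiting the required bounded $\Delta$-weak approximate identity for $\mathcal A$.

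For the converse, let $\{e_\alpha\}_\alpha\subset\mathcal A$ be a bounded $\Delta$-weak approximate identity of $\mathcal A$ and let $\{u_\beta\}_\beta$ be a standard bounded approximate identity of $L^1(G)$. I would work with the product net $\{u_\beta\otimes e_\alpha\}_{(\alpha,\beta)}$ in $L^1(G,\mathcal A)$, whose projective-tensor norm is bounded by the product of the two bounds. For any $\chi\in\widehat G$, $\psi\in\Delta(\mathcal A)$ and $v\in L^1(G,\mathcal A)$, multiplicativity of the character $\chi\otimes\psi$ gives
\[
(\chi\otimes\psi)\bigl((u_\beta\otimes e_\alpha)\,v\bigr)=\chi(u_\beta)\,\psi(e_\alpha)\,(\chi\otimes\psi)(v),
\]
and the preliminary observation supplies $\chi(u_\beta)\to 1$ and $\psi(e_\alpha)\to 1$ in the product net; so the right-hand side tends to $(\chi\otimes\psi)(v)$, as required.

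I do not anticipate a serious obstacle. The only mildly delicate point is the identity $\psi\circ\theta_{\overline{\chi_0}}=\chi_0\otimes\psi$: it is transparent on rank-one tensors and extends by continuity, but one should simultaneously verify that $\theta_{\overline{\chi_0}}$ itself is well defined (independent of the chosen series expansion of $e_\alpha$), which follows from the universal property of $L^1(G)\widehat\otimes\mathcal A$ applied to the bounded bilinear map $(f,a)\mapsto\chi_0(f)\,a$. Everything else is a direct application of the single observation that characters take the value $1$ along any bounded $\Delta$-weak approximate identity.
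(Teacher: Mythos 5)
Your proposal is correct and follows essentially the same route as the paper: in the forward direction you push the approximate identity down to $\mathcal A$ via the slice map $\theta_{\overline{\chi_0}}$ (the paper uses $\theta_{\chi_0}(u_\alpha)$ for a fixed $\chi_0\in\widehat G$), and in the converse you tensor an approximate identity of $L^1(G)$ with one of $\mathcal A$, exactly as the paper does with the net $(e_\alpha\otimes f_\beta)$. The only cosmetic differences are that you make explicit the observation that characters converge to $1$ along such a net (which the paper uses without comment) and that you take a genuine bounded approximate identity of $L^1(G)$ rather than merely a $\Delta$-weak one.
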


\begin{proof}
	Suppose that $\{u_\alpha\}_{\alpha\in\Lambda}$ is a bounded $\Delta-$weak approximate identity for $L^1(G,\mathcal A)$, bounded
	by $M>0$. Thus for all $\chi\in \widehat{G}$ and  $\varphi\in \Delta(\mathcal A)$ we have $\lim_{\alpha}(\chi\otimes\varphi)(u_\alpha)=1$. Take $\chi_0\in\widehat{G}$ and $\varphi_0\in\Delta(\mathcal A)$  to be fixed and define
	$$e_\alpha:=\theta_{\varphi_{0}}(u_{\alpha}) \in L^1(G)\;\;\;\;\;\;(\alpha\in\Lambda)$$
	and
	$$f_\alpha:=\theta_{\chi_{0}}(u_{\alpha}) \in \mathcal A\;\;\;\;\;\;(\alpha\in\Lambda)$$
	Then it is easily verified that $\{e_{\alpha}\}_{\alpha\in\Lambda}$ and $\{f_{\alpha}\}_{\alpha\in\Lambda}$ are bounded $\Delta-$weak approximate identity for $L^1(G)$
	and $\mathcal A$, respectively.
	
	Conversely, suppose that $(e_\alpha)$ and $(f_\beta)$ are bounded $\Delta-$weak approximate identity for $L^1(G)$
	and $\mathcal A$, respectively. It is easily obtained that $(e_\alpha\otimes f_\beta)_{\alpha,\beta}$ is a bounded
	$\Delta-$weak approximate identity for $L^1(G,\mathcal A)$.
\end{proof}

Before we proceed to the main result of this section, we require the following theorem. We recal that $\overline{G}$ is the Bohr compactification of $G$, introduced in \cite{Rud}. 

\begin{theorem}\label{t555}
	Let $\mathcal A$ be a unital BSE Banach algebra and $ G $ be an abelian  locally compact Hausdorff group. Then
	$$ C_{BSE} \big( \Delta ( L^1 (G, \mathcal{A} ))\big)=M_b\big(G,C_{BSE}(\Delta (\mathcal{A}))\big)^{\wedge}.$$
\end{theorem}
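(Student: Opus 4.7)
The plan is to identify $\Delta(L^1(G,\mathcal A))$ with $\widehat{G}\times\Delta(\mathcal A)$ (already recalled in Section~2) and to prove the two inclusions separately, using the classical scalar BSE theorem $C_{BSE}(\widehat G)=\widehat{M_b(G)}$ as the main black box. Throughout I write $E:=C_{BSE}(\Delta(\mathcal A))$; the hypothesis that $\mathcal A$ is a unital BSE algebra guarantees that $E=\widehat{M(\mathcal A)}$ is a unital commutative semisimple Banach algebra, and every $\varphi\in\Delta(\mathcal A)$ acts on $E$ as the point-evaluation $\widehat{T}\mapsto \widehat{T}(\varphi)$, which is a continuous character.

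For the inclusion $M_b(G,E)^{\wedge}\subseteq C_{BSE}(\Delta(L^1(G,\mathcal A)))$, I start from $\mu\in M_b(G,E)$ and define its Fourier--Stieltjes transform
$$
\widehat\mu(\chi,\varphi):=\Bigl(\int_G\overline{\chi(x)}\,d\mu(x)\Bigr)(\varphi),\qquad (\chi,\varphi)\in\widehat G\times\Delta(\mathcal A).
$$
For each fixed $\varphi$, composition with $\varphi$ produces a scalar measure $\mu_\varphi\in M_b(G)$ with $\|\mu_\varphi\|\le\|\mu\|$, and $\widehat{\mu_\varphi}(\chi)=\widehat\mu(\chi,\varphi)$. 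Given a finite combination $\sum_{k}\alpha_k(\chi_k\otimes\varphi_k)$, I would estimate $\bigl|\sum_k\alpha_k\widehat\mu(\chi_k,\varphi_k)\bigr|$ by first realizing it as $\int_G\bigl\langle \sum_k\alpha_k\overline{\chi_k}(x)\varphi_k,\,d\mu(x)\bigr\rangle$, then using the duality between $M_b(G,E)$ and $C_0(G,E^*)$ to bound it by $\|\mu\|\cdot\|\sum_k\alpha_k(\chi_k\otimes\varphi_k)\|_{L^1(G,\mathcal A)^*}$. This yields $\widehat\mu\in C_{BSE}(\Delta(L^1(G,\mathcal A)))$ with $\|\widehat\mu\|_{BSE}\le\|\mu\|$.

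For the reverse inclusion, take $\sigma\in C_{BSE}(\Delta(L^1(G,\mathcal A)))$ with BSE-norm $\beta$. For each fixed $\varphi\in\Delta(\mathcal A)$, the slice $\sigma_\varphi(\chi):=\sigma(\chi,\varphi)$ satisfies the BSE inequality on $\widehat G$ with the \emph{same} constant $\beta$, because every test sum $\sum\alpha_k\chi_k$ on $\widehat G$ can be promoted to $\sum\alpha_k(\chi_k\otimes\varphi)$ on $\Delta(L^1(G,\mathcal A))$ with no increase of norm. By the scalar BSE theorem, there is a unique $\mu_\varphi\in M_b(G)$ with $\widehat{\mu_\varphi}=\sigma_\varphi$ and $\|\mu_\varphi\|\le\beta$. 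Symmetrically, for each fixed $\chi\in\widehat G$, the slice $\varphi\mapsto\sigma(\chi,\varphi)$ lies in $E$ and has BSE-norm bounded by $\beta$. The task is to package the family $\{\mu_\varphi\}$ into a single $E$-valued measure $\mu\in M_b(G,E)$. I would do this by defining, for $f\in C_0(G)$, the functional $\Lambda(f)(\varphi):=\int_G f\,d\mu_\varphi$, verifying via the joint BSE inequality that $\Lambda(f)\in E$ with $\|\Lambda(f)\|_{BSE}\le\beta\|f\|_\infty$, so that $\Lambda:C_0(G)\to E$ is a continuous linear map, hence (by the vector-valued Riesz representation for regular $E$-valued measures) comes from a unique $\mu\in M_b(G,E)$ of total variation at most $\beta$. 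A routine check shows $\widehat\mu(\chi,\varphi)=\widehat{\mu_\varphi}(\chi)=\sigma(\chi,\varphi)$.

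The principal obstacle is the \emph{synthesis} step in the converse: showing that the slice-measures $\mu_\varphi$ assemble into a genuine vector-valued measure with values in $E$ (as opposed to an $E^{**}$-valued or merely finitely-additive object), together with control of its semivariation by $\|\sigma\|_{BSE}$. This is where the BSE-hypothesis on $\mathcal A$ is used crucially, since it gives $E=\widehat{M(\mathcal A)}$ and thus a genuine Banach-algebra target with well-behaved duality; without it, one would only be able to identify $\sigma$ with a weakly measurable family indexed by $\Delta(\mathcal A)$. Once the identification $\widehat{\mu}=\sigma$ is established pointwise on $\widehat G\times\Delta(\mathcal A)$, the two inclusions together, combined with the isometric estimates, yield the stated equality.
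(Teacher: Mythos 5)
Your forward inclusion $M_b(G,E)^{\wedge}\subseteq C_{BSE}(\Delta(L^1(G,\mathcal A)))$ is essentially the computation the paper performs (pull the finite sum inside the integral, recognize $\sum_k\alpha_k\overline{\chi_k(x)}\varphi_k(\,\cdot\,)$ as the pairing of $\delta_{x^{-1}}\otimes\mu(x)$ with $\sum_k\alpha_k(\chi_k\otimes\varphi_k)$, and bound by the total variation). For the reverse inclusion, however, you take a genuinely different route: the paper never invokes the scalar Bochner--Eberlein theorem slice-wise; instead it forms the functions $f_{\psi,\lambda}(\gamma)=\sum_i c_i\overline{\gamma(\chi_i)}\varphi_i$ with values in $M=\overline{\langle\Delta(\mathcal A)\rangle}\subseteq\mathcal A^*$, shows the assignment $f_{\psi,\lambda}\mapsto\sum_i c_i\sigma(\chi_i\otimes\varphi_i)$ is bounded on the linear span inside $C(\overline G,M)$ by the joint BSE inequality, and extends by Hahn--Banach to obtain one element of $C(\overline G,M)^*=M_b(\overline G,M^*)$ in a single stroke. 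Your slice-and-glue strategy is attractive because it uses the classical theorem as a black box, and your extraction of the BSE inequality for $\Lambda(f)$ can in fact be made to work (since $\bigl\|(\sum_j\beta_j\chi_j)\otimes(\sum_k\alpha_k\varphi_k)\bigr\|_{L^1(G,\mathcal A)^*}=\|\sum_j\beta_j\chi_j\|_\infty\,\|\sum_k\alpha_k\varphi_k\|_{\mathcal A^*}$, the signed measure $\sum_k\alpha_k\mu_{\varphi_k}$ has total variation at most $\beta\|\sum_k\alpha_k\varphi_k\|_{\mathcal A^*}$ by the isometric scalar theorem).

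The synthesis step, which you correctly identify as the principal obstacle, is nevertheless left with a genuine gap, for two concrete reasons. First, membership $\Lambda(f)\in E=C_{BSE}(\Delta(\mathcal A))$ requires $\varphi\mapsto\int_G f\,d\mu_\varphi$ to be \emph{continuous} on $\Delta(\mathcal A)$, not merely to satisfy the BSE inequality; for noncompact $G$ this does not follow from your data, because a bounded net $\mu_{\varphi_\alpha}$ whose Fourier--Stieltjes transforms converge pointwise on $\widehat G$ need not satisfy $\int_G f\,d\mu_{\varphi_\alpha}\to\int_G f\,d\mu_\varphi$ for $f\in C_0(G)$ (characters are not in $C_0(G)$, and trigonometric polynomials are not uniformly dense in $C_0(G)$). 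Second, the ``vector-valued Riesz representation'' you appeal to does not say what you need: the theorem $C_0(G,E)^*\cong M_b(G,E^*)$ represents \emph{functionals} on $E$-valued functions by $E^*$-valued measures, whereas a bounded \emph{operator} $\Lambda\colon C_0(G)\to E$ is represented (Bartle--Dunford--Schwartz) only by a finitely additive $E^{**}$-valued measure of bounded semivariation; obtaining a countably additive $E$-valued measure of bounded \emph{variation} requires $\Lambda$ to be weakly compact, indeed absolutely summing, and neither property is established nor follows merely from $E$ being a unital Banach algebra. To be fair, the paper's own proof faces the mirror-image difficulty --- it silently converts the Hahn--Banach measure on the Bohr compactification $\overline G$ into a measure concentrated on $G$ --- but your write-up replaces that difficulty with an appeal to a representation theorem that is not available in the form you use it, so the converse inclusion is not yet proved.
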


\begin{proof}
	By the hypothesis,  $ \mathcal{A}=\widehat{\mathcal{A}}=C_{BSE} ( \Delta ( \mathcal{A} ))$. For convinience, let $B:=M_b \big(G, C_{BSE} ( \Delta ( \mathcal{A} ))\big)$ and 
	take $\mu\in B$. Thus $ \widehat{\mu} \in C_b ( \Delta (B)) $ and $ \widehat{\mu}(\psi)=\psi(\mu)$,  for any $ \psi \in \Delta (B) $. Since $L^1(G,\mathcal{A})$ is an ideal in $ M_b (G,\mathcal A) $, thus
	$$
	\Delta \big( L^1 (G , \mathcal{A})\big)=\lbrace  \psi \vert_{L^1 (G, \mathcal{A})}  : \, \psi \in \Delta (B)\rbrace 
	= \widehat{G} \otimes \Delta ( \mathcal{A}).
	$$
	Suppose that $M$ is the closed linear span of $\Delta(\mathcal{A})$ and $\overline{G}$ is the Bohr compactification of 
	$G$. For any $\chi\in\widehat{G}$ and $\varphi\in\Delta(\mathcal A)$, define
	$$  f_{\chi \otimes \varphi} ( \gamma )=\overline{\gamma ( \chi ) }  \varphi \;\;\;\;\;\;\;\;\;\;(\gamma \in \overline{G})$$
	and for any complex numbers $ \lambda = ( c_i )  $,  $ \psi = ( \psi_i )\subseteq\Delta(L^1(G,\mathcal A))$ we have
	$$  f_{\psi , \lambda} ( \gamma ) = \sum_{i=1}^{n} c_i \overline{\gamma ( \chi_i ) } \varphi_i  \quad ( \gamma \in \overline{G} )$$
	such that $\psi_i=\chi_i \otimes \varphi_i$, for each $1\leq i \leq n$. We prove that $f_{\psi , \lambda} \in 
	C (\overline{G},M)$. In fact,
	$$  \big\Vert f_{\psi , \lambda} \big\Vert_{\infty , M} = \sup \left\{\big\Vert \sum_{i=1}^{n} c_i \overline{\gamma ( \chi_i ) } \varphi_i \big\Vert_{\mathcal{A}^{\star}} : \, \gamma \in \overline{G}\right\}$$
	and
	$$  \big\Vert \sum_{i=1}^{n} c_i \overline{\gamma ( \chi_i ) } \varphi_i \big\Vert_{\mathcal{A}^{\star}} = \underset{\big\Vert a \big\Vert_{\mathcal{A}} \leq 1}{\sup}\left\{\big\vert \sum_{i=1}^{n}  c_i \overline{\gamma ( \chi_i ) } \varphi_i (a) \big\vert\right\}.$$
	Therefore
	\begin{align*}
	\big\Vert f_{\psi , \lambda} \big\Vert_{\infty , M} &= \underset{\gamma \in \overline{G} }{\sup} \,\,\, \underset{\Vert a \Vert \leq 1}{\sup} \big\lbrace  \big\vert \sum_{i=1}^{n}  c_i \overline{\gamma ( \chi_i ) } \varphi_i (a) \big\vert  \big\rbrace \\
	&= \underset{\Vert a \Vert \leq 1}{\sup} \,\,\, \underset{\gamma \in \overline{G} }{\sup}  \big\lbrace  \big\vert \sum_{i=1}^{n}  c_i \overline{\gamma ( \chi_i ) } \varphi_i (a) \big\vert  \big\rbrace \\
	&= \underset{\Vert a \Vert \leq 1}{\sup} \,\,\, \underset{x \in G }{\sup}  \big\lbrace  \big\vert \sum_{i=1}^{n}  c_i \overline{\chi_i ( x ) } \varphi_i (a) \big\vert  \big\rbrace \\
	&= \underset{\Vert a \Vert \leq 1}{\sup} \,\,\, \underset{x \in G }{\sup} \big\lbrace \big\vert \delta_{x^{-1}} \otimes \widehat{a} \big(\sum_{i=1}^{n}  c_i  \chi_i \otimes \varphi_i \big) \big\vert  \big\rbrace  \\
	&\leq \underset{\Vert a \Vert \leq 1}{\sup} \,\,\, \underset{x \in G }{\sup} \big\Vert \delta_{x^{-1}} \otimes \widehat{a} \big\Vert \cdot \big\Vert  \sum_{i=1}^{n}  c_i  ( \chi_i \otimes \varphi_i )  \big\Vert \\
	&\leq \big\Vert  \sum_{i=1}^{n}  c_i ( \chi_i \otimes \varphi_i )  \big\Vert_{L^1(G, \mathcal{A} )^{\star}}  .
	\end{align*}
	This implies that $ f_{\psi , \lambda} $ is bounded. Moreover, 
	if $ \gamma_{\alpha}  \overset{w^{\star}}{\longrightarrow} \gamma$ in $ \overline{G} $, then
	$$
	f_{\psi , \lambda} ( \gamma_{\alpha}  ) =\sum_{i=1}^{n}  c_i \overline{\gamma_{\alpha} ( \chi_i )}  \varphi_i 
	\longrightarrow  \sum_{i=1}^{n}  c_i \overline{\gamma ( \chi_i )}  \varphi_i = f_{\psi , \lambda} ( \gamma ) .
	$$
	Therefore $ f_{\psi , \lambda} $ is continuous and so $ f_{\psi , \lambda} \in C ( \overline{G} , M ) $.
	Now suppose that
	$T: \langle f_{\chi \otimes \varphi}:\chi\in\widehat{G},\varphi\in\Delta(\mathcal A) \rangle\longrightarrow \mathbb{C}$, defined as
	$$T(f_{\chi \otimes \varphi})=\int_{G} \overline{\chi (x)}\mu (x)(\varphi) dx.$$
	Let $\overline{T}$ belonging to $C( \overline{G}, M)^{*}=M_b (\overline{G}, M^*) $ is the extension of the functional 
	$T$. Then $\widehat{\mu}(\chi\otimes \varphi )=\overline{\mu}(f_{\chi \otimes \varphi})$, where $ \overline{\mu} $ is the corresponding measure of the functional $\overline{T}$.
	For any complex numbers $c_1,\cdots,c_n$ and the same number of $\varphi_1,\cdots,\varphi_n \in \Delta ( \mathcal{A}) $ and $\chi_1,\cdots,\chi_n \in \widehat{G}$, we have
	\begin{align*}
	\big\vert \sum_{i=1}^{n}  c_i \widehat{\mu} ( \chi_i \otimes \varphi_i ) \big\vert &=  \big\vert \sum_{i=1}^{n}  c_i \int_{G} \overline{\chi_i (x)} \mu (x) ( \varphi_i ) dx \big\vert\\
	&\leq \int_{G} \big\vert  \sum_{i=1}^{n}  c_i \overline{\chi_i (x)} \mu (x) ( \varphi_i )  \big\vert dx \\
	&= \int_{G} \big\vert  \sum_{i=1}^{n}  c_i ( \delta_{x^{-1}} \otimes \mu (x) ) ( \chi_i \otimes  \varphi_i )  \big\vert dx\\
	&\leq \int_{G} \big\Vert \delta_{x^{-1}} \otimes \mu (x) \big\Vert\;\big\Vert \sum_{i=1}^{n} c_i (\chi_i \otimes  \varphi_i )  \big\Vert dx  \\
	&= \int_{G} \big\Vert \mu (x) \big\Vert_{BSE}\, dx \;\big\Vert  \sum_{i=1}^{n} c_i (\chi_i \otimes  \varphi_i )  \big\Vert_{L^1 (G , \mathcal{A} )^{\star}}\\
	&= \big\Vert \mu \big\Vert_{1 , BSE}\;\big\Vert \sum_{i=1}^{n} c_i (\chi_i \otimes  \varphi_i )  \big\Vert_{L^1 (G , \mathcal{A} )^{\star}},
	\end{align*}
	which implies that $\big\Vert \widehat{\mu} \big\Vert_{BSE} \leq \big\Vert \mu \big\Vert_{1, BSE} $. Moreover, the Gelfand mapping $ \mu \mapsto \widehat{\mu} $ is continuous,  $ \widehat{\mu} \in C_b ( \Delta (B))  $ and $ \big\Vert \widehat{\mu} \big\Vert_{\infty}  \leq \big\Vert \mu \big\Vert$. If $ \psi_{\alpha}  \overset{w^{\star}}{\longrightarrow}\psi $ in 
	$ \Delta \big( L^1 (G, \mathcal{A})\big) $, then $\overline{\psi_{\alpha}} \overset{w^{\star}}{\longrightarrow} \overline{\psi}$ in $ \Delta (B) $, where any $\overline{\psi_{\alpha}}$ is an extension of $\psi_{\alpha}$ such that 
	$\overline{\psi_{\alpha}} \big\vert_{L^1 (G, \mathcal{A})}=\psi_{\alpha}.$ Thus
	\begin{center}
		$ \widehat{\mu} ( \psi_{\alpha} ) = \widehat{\overline{\mu}} ( \overline{\psi_{\alpha}} ) \longrightarrow \widehat{\overline{\mu}} ( \overline{\psi} ) = \widehat{\mu} ( \psi)$
	\end{center}
	and so $ \widehat{\mu} \in C_b ( \Delta ( \mathcal{A})) .$
	This implies that $ \widehat{\mu} \in C_{BSE} \big( \Delta ( \mathcal{A})\big) $. Consequently,
	\begin{equation}\label{ee204}
	\widehat{M_b \big(G , C_{BSE} ( \Delta ( \mathcal{A}))\big)} \subseteq C_{BSE} \big( \Delta ( L^1 (G, \mathcal{A}))\big).
	\end{equation}
	Now, we prove that the reverse of the inclusion \eqref{ee204}. Take $ \sigma \in C_{BSE} \big( \Delta ( L^1 ( G, \mathcal{A}))\big) $. Then for any complex numbers $c_1,\cdots,c_n$ and the same number of $\varphi_1,\cdots,\varphi_n$ belonging to $\Delta ( \mathcal{A}) $ and $\chi_1,\cdots,\chi_n \in \widehat{G}$, we have
	\begin{center}
		$\big\vert \sum_{i=1}^{n} c_i \sigma ( \chi_i \otimes \varphi_i ) \big\vert  \leq \big\Vert \sigma \big\Vert_{BSE}\; \big\Vert \sum_{i=1}^{n} c_i ( \chi_i \otimes \varphi_i ) \big\Vert_{L^1 (G, \mathcal{A} )^{\star}}.$
	\end{center}
	Define $U: \langle f_{\psi , \lambda} \rangle\longrightarrow \mathbb{C}$ by $ U( f_{\psi , \lambda }) := \sum_{i=1}^{n} c_i \sigma ( \chi_i \otimes \varphi_i )$, in which $ \lambda = ( c_i )  $, $ \psi = ( \psi_i ) $ and
	$$  f_{\psi , \lambda} ( \gamma ) = \sum_{i=1}^{n} c_i \overline{\gamma ( \chi_i ) } \varphi_i  \quad ( \gamma \in \overline{G} ).$$
	According to the Hahn–Banach theorem, the functional $ U $ has the extension $ \overline{U} $ over $ C( \overline{G} , M) $ such that
	\begin{center}
		$\overline{U} ( f_{\psi , \lambda} ) = U ( f_{\psi , \lambda} ) = \sum_{i=1}^{n}  c_i \sigma ( \chi_i \otimes \varphi_i ) .$	
	\end{center}
	Suppose that $ \vartheta \in M_b ( \overline{G} , M^{\star} ) $ such that
	\begin{align*}
	U( f_{\psi , \lambda} ) &= \int_{\overline{G}} f_{\psi , \lambda} ( \gamma ) d \vartheta ( \gamma )  \\
	&= \int_G f_{\psi , \lambda} ( \widehat{x}) d \vartheta ( \widehat{x} ) \\
	&:= \int_G \sum_{i=1}^{n} c_i \overline{\chi_i (x)} \varphi_i  d \mu (x) \\
	&= \int_G \mu (x) (  \sum_{i=1}^{n} c_i \overline{\chi_i (x)} \varphi_i  ) dx \\
	&= \int_G   \sum_{i=1}^{n} c_i \overline{\chi_i (x)} \mu (x) ( \varphi_i  ) dx\\
	&:= \widehat{\mu} (  \sum_{i=1}^{n} c_i ( \chi_i \otimes \varphi_i )  )  .
	\end{align*}
	Thus
	$$  \sum_{i=1}^{n} c_i \sigma ( \chi_i \otimes \varphi_i ) =  \widehat{\mu} \big(  \sum_{i=1}^{n} c_i ( \chi_i \otimes \varphi_i )  \big) .$$
	In a special case, $ \sigma ( \chi \otimes \varphi ) = \widehat{\mu} ( \chi \otimes \varphi ) $ and so $ \sigma = \widehat{\mu} $, for some $ \mu \in M_b (G , M^{\star} ) $. Now, we show that the mapping $ x \longmapsto \mu (x) $ belongs to $ M_b \big(G , C_{BSE} ( \Delta ( \mathcal{A} ))\big) $.	For any complex numbers $c_1,\cdots,c_n$
	and the same number of $\varphi_1,\cdots,\varphi_n \in \Delta ( \mathcal{A}) $ we have
	\begin{center}
		$\big\vert \sum_{i=1}^{n}  c_i  \mu (x) ( \varphi_i ) \big\vert = \big\vert \mu (x)  \big(\sum_{i=1}^{n}  c_i  \varphi_i \big) \big\vert
		\leq \big\Vert \mu (x) \big\Vert \cdot \big\Vert \sum_{i=1}^{n}  c_i  \varphi_i \big\Vert_{\mathcal{A}^{\star}} .$
	\end{center}
	Therefore $ \mu (x) \in C_{BSE} ( \Delta ( \mathcal{A})) $, for all $ x \in G $. Then $ \mu \in M_b \big(G, C_{BSE} ( \Delta ( \mathcal{A}))\big) $ such that $ \sigma = \widehat{\mu} $. Consequently,
	\begin{equation}\label{ee205}
	C_{BSE} \big( \Delta (L^1 (G , \mathcal{A}))\big) \subseteq M_b \big(G , C_{BSE} ( \Delta ( \mathcal{A}))\widehat{\big)} .
	\end{equation}
	Now \eqref{ee204} and \eqref{ee205} imply that
	$$  C_{BSE} ( \Delta ( L^1 (G , \mathcal{A} ))) = M_b (G, C_{BSE} ( \Delta ( \mathcal{A}))\widehat{)} . $$
\end{proof}

\begin{theorem}\label{t100}
	Let $ \mathcal{A} $ be a unital commutative and semisimple Banach algebra, with the identity with norm one, and $G$ be an abelian locally compact Hausdorff group. Then $ L^1(G,\mathcal{A})$  is a BSE-algebra if and only if $\mathcal{A}$ is a BSE-algebra.
\end{theorem}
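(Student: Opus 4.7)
My plan is to split the proof into the two implications, exploiting Theorem \ref{t555} in the backward direction and testing against simple tensors $f\otimes a$ in the forward direction. Two free facts are available throughout: since $\mathcal{A}$ is unital with $\|e\|=1$, every multiplier $T\in M(\mathcal{A})$ is multiplication by $T(e)\in\mathcal{A}$, so $\widehat{M(\mathcal{A})}=\widehat{\mathcal{A}}$; and $\mathcal{A}$ has a bounded $\Delta$-weak approximate identity (namely $\{e\}$), so Proposition \ref{p100} supplies one for $L^1(G,\mathcal{A})$. Consequently the inclusion $\widehat{M(L^1(G,\mathcal{A}))}\subseteq C_{BSE}(\Delta(L^1(G,\mathcal{A})))$ holds automatically, so in each direction it suffices to establish the reverse containment.

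\emph{Forward direction.} Assume $L^1(G,\mathcal{A})$ is BSE and fix $\sigma\in C_{BSE}(\Delta(\mathcal{A}))$. Pick $\chi_0\in\widehat{G}$ together with $f\in L^1(G)$ satisfying $\widehat{f}(\chi_0)=1$, and set
$$
\widetilde{\sigma}(\chi,\varphi):=\widehat{f}(\chi)\,\sigma(\varphi)\qquad((\chi,\varphi)\in\widehat{G}\times\Delta(\mathcal{A})).
$$
This is a continuous and bounded function on $\Delta(L^1(G,\mathcal{A}))$. To see it is a BSE-function, test the norm of $\sum_i c_i(\chi_i\otimes\varphi_i)$ in $L^1(G,\mathcal{A})^*$ against elementary tensors $f\otimes a$ with $\|a\|_{\mathcal{A}}\le 1$; this yields
$$
\Bigl\|\sum_i c_i\widehat{f}(\chi_i)\varphi_i\Bigr\|_{\mathcal{A}^*}\le\|f\|_1\Bigl\|\sum_i c_i(\chi_i\otimes\varphi_i)\Bigr\|_{L^1(G,\mathcal{A})^*},
$$
and applying $\|\sigma\|_{BSE}$ gives $\|\widetilde{\sigma}\|_{BSE}\le\|f\|_1\|\sigma\|_{BSE}$. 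By hypothesis $\widetilde{\sigma}=\widehat{T}$ for some $T\in M(L^1(G,\mathcal{A}))$. Set $u:=T(f\otimes e)\in L^1(G,\mathcal{A})$. Since $\widehat{f\otimes e}(\chi,\varphi)=\widehat{f}(\chi)$ (using $\varphi(e)=1$), the multiplier identity and $\widehat{f}(\chi_0)=1$ give $(\chi_0\otimes\varphi)(u)=\widehat{f}(\chi_0)^2\sigma(\varphi)=\sigma(\varphi)$, i.e., $\sigma(\varphi)=\varphi(\theta_{\chi_0}(u))$. Hence $\sigma=\widehat{\theta_{\chi_0}(u)}\in\widehat{\mathcal{A}}=\widehat{M(\mathcal{A})}$, so $\mathcal{A}$ is BSE.

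\emph{Backward direction.} Assume $\mathcal{A}$ is BSE; being unital and semisimple it satisfies the hypothesis of Theorem \ref{t555}, which yields
$$
C_{BSE}(\Delta(L^1(G,\mathcal{A})))=\widehat{M_b(G,\mathcal{A})}.
$$
For each $\mu\in M_b(G,\mathcal{A})$, convolution $T_\mu u:=\mu*u$ defines a multiplier of $L^1(G,\mathcal{A})$, and a direct Fubini calculation shows its Gelfand transform is $(\chi\otimes\varphi)\mapsto\int_G\overline{\chi(x)}\varphi(\mu(x))\,dx$, which is exactly the transform of $\mu$ produced in Theorem \ref{t555}. Therefore $\widehat{M_b(G,\mathcal{A})}\subseteq\widehat{M(L^1(G,\mathcal{A}))}$, and chaining with the automatic inclusion above forces
$$
\widehat{M(L^1(G,\mathcal{A}))}=C_{BSE}(\Delta(L^1(G,\mathcal{A}))),
$$
so $L^1(G,\mathcal{A})$ is a BSE-algebra.

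The main obstacle is this last identification in the backward direction: one must verify that convolution by a vector-valued bounded measure genuinely produces a bounded multiplier of $L^1(G,\mathcal{A})$ and that the two a priori different notions of ``Gelfand transform of $\mu$'' — one from its action as a multiplier on $L^1(G,\mathcal{A})$, the other constructed inside the proof of Theorem \ref{t555} via the functional extended to $C(\overline{G},M)$ — coincide on $\widehat{G}\times\Delta(\mathcal{A})$. This is a vector-valued Wendel-type fact, reducible to Fubini plus the definitions, but needs care because $M_b(G,\mathcal{A})$ must be interpreted as the same measure algebra used in Theorem \ref{t555}. By contrast, the forward direction is largely mechanical once the test function $\widetilde{\sigma}=\widehat{f}\cdot\sigma$ is chosen and one observes that the unital hypothesis allows us to multiply the multiplier $T$ against the canonical tensor $f\otimes e$.
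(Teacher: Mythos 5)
Your proof is correct and follows essentially the same route as the paper: the backward direction combines Theorem \ref{t555} with the identification $M(L^1(G,\mathcal{A}))=M_b(G,\mathcal{A})$, and the forward direction lifts $\sigma$ to the BSE-function $(\chi,\varphi)\mapsto\widehat{f}(\chi)\sigma(\varphi)$ on $\widehat{G}\times\Delta(\mathcal{A})$ and evaluates the resulting multiplier at a simple tensor. The only cosmetic difference is that you apply the multiplier to $f\otimes e$ and read off $\sigma\in\widehat{\mathcal{A}}$ directly, whereas the paper applies it to $f_0\otimes a$ for arbitrary $a$ to conclude $\sigma\widehat{a}\in\widehat{\mathcal{A}}$; both close the argument the same way.
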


\begin{proof}
	Suppose that $ \mathcal{A} $ is a BSE-algebra. Then by Theorem \ref{p100}, $L^1 (G,\mathcal{A})$ has a bounded $\Delta-$weak approximate identity. By \cite{UB} we have 
	$$M\big(L^1 (G,\mathcal{A} )\big)=M_b (G,\mathcal{A})$$ and so 
	\begin{center}	
		$\widehat{M\big(L^1 (G , \mathcal{A}))}=\widehat{M_b (G , \mathcal{A})}\subseteq C_{BSE} \big( \Delta ( L^1 (G , \mathcal{A} ))\big).$
	\end{center}	
	Moreover, by Theorem \ref{t555}, we have
	$$ C_{BSE} \big( \Delta ( L^1 (G , \mathcal{A} ))\big) \subseteq\widehat{M_b (G, \mathcal{A})}.$$
	As a result, the equality is established and so $L^1 (G,\mathcal{A})$ is a BSE-algebra.
	
	Conversely, let $ L^1 (G , \mathcal{A}) $  be a BSE-algebra.
	By the hypothesis, $L^1(G,\mathcal A)$ has  a bounded $\Delta$-weak approximate identity
	and so by Proposition \ref{p100}, $\mathcal A$ has so. It follows from \cite[Corollary 5]{Tak1} that
	$$\mathcal M(\mathcal A)\subseteq C_{BSE}(\Delta (\mathcal A)).$$
	Now, we prove the reverse of the inclusion. To this end, take $\sigma \in C_{BSE}(\Delta (\mathcal A))$ and $a \in \mathcal A.$ We find $b\in \mathcal A$ such that $\sigma \widehat{a}=\widehat{b}.$
	Take $\chi_{0}\in\widehat{G}$ to be fixed. Then there exists $f_{0}\in L^1(G)$ such that
	$\chi_{0}(f_{0})=1$. Since $\sigma \in C_{BSE}(\Delta (\mathcal A))$ there exist a bounded net $\{x_{\lambda}\}\subseteq \mathcal A$ such that for all $\varphi\in \Delta (\mathcal A)$, we have $\lim_{\lambda}\widehat {x_{\lambda}}(\varphi)=\sigma(\varphi)$.
	It follows that  $$\lim_{\lambda}\widehat{f_0\otimes x_{\lambda}}(\chi\otimes \varphi)=\lim_{\lambda}\chi(f_{0})\varphi(x_{\lambda})=\chi(f_{0})\sigma(\varphi).$$
	Define $$\sigma_{1}: \widehat{G}\times\Delta(\mathcal A)\rightarrow \mathbb C$$
	with $\sigma_{1}(\chi \otimes \varphi)=\sigma(\varphi)\chi(f_{0}).$
	It is easily verified that $\sigma_{1}\in C_{BSE}(\widehat{G}\times\Delta(\mathcal A))$
	and $\|\sigma_1\|_{BSE}\leq\|\sigma\|_{BSE}$.
	Thus there exists $g_0\in L^1(G,\mathcal A)$  such that
	$$
	\sigma_1\widehat{f_0\otimes a}=\widehat{g_0}.
	$$
	Note that $g_0=\sum_{n=1}^\infty g_n\otimes a_n$
	with
	$$
	\sum_{n=1}^\infty\|g_n\|_1\|a_n\|<\infty.
	$$
	Thus for all $\varphi\in\Delta(\mathcal A)$, we have
	$$
	\sigma(\varphi)\varphi(a)=\sigma_1(\chi_0\otimes\varphi)\varphi(a)=\sum_{n=1}^\infty\chi_0(g_n)\varphi(a_n)=
	\widehat{\sum_{n=1}^\infty\chi_0(g_n)a_n}(\varphi).
	$$
	Thus by choosing $b:=\sum_{n=1}^\infty\chi_0(g_n)a_n$, we obtain
	$\sigma\widehat{a}=\widehat{b}$. Therefore the proof is completed.
\end{proof}

\section{\bf $\ell^{1}(G,\mathcal A)$ as a BSE-algebra}
\begin{theorem}\label{t55}
Let $\mathcal A$ be a commutative and semisimple Banach algebra and $G$ be an abelian discrete group. Then 
$$  C_{BSE}\big( \Delta ( \ell^{1} (G , \mathcal{A} ))\big) = \ell^{1} \big( G , C_{BSE} ( \Delta ( \mathcal{A} ))\widehat{\big)}.  $$
 \end{theorem}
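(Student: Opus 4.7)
The overall plan is to adapt the proof of Theorem \ref{t555} to the discrete setting, exploiting two simplifications that are automatic when $G$ is discrete: $\ell^1(G) = L^1(G)$ and $M_b(G,X) = \ell^1(G,X)$ for any Banach space $X$. These identifications should let us drop the unital/BSE hypothesis on $\mathcal A$ used in Theorem \ref{t555}, because the identification $\Delta(\ell^1(G,\mathcal A)) \cong \widehat G \times \Delta(\mathcal A)$ is now available directly from $\ell^1(G,\mathcal A) = \ell^1(G)\widehat\otimes\mathcal A$ together with the semisimplicity of $\mathcal A$, bypassing the ``ambient multiplier algebra'' step.

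For the inclusion ``$\supseteq$'' I would associate to any $\mu = (\mu_x)_{x\in G} \in \ell^1(G,C_{BSE}(\Delta(\mathcal A)))$ the transform
$$\widehat\mu(\chi\otimes\varphi) = \sum_{x\in G} \overline{\chi(x)}\,\mu_x(\varphi),$$
which converges absolutely and is continuous on $\widehat G \times \Delta(\mathcal A)$. The BSE inequality would then follow from the pointwise estimate
$$\left|\sum_{i} c_i\widehat\mu(\chi_i\otimes\varphi_i)\right| \le \sum_{x\in G}\|\mu_x\|_{BSE}\left\|\sum_{i} c_i\overline{\chi_i(x)}\varphi_i\right\|_{\mathcal A^*},$$
combined with the bound $\|\sum_i c_i\overline{\chi_i(x)}\varphi_i\|_{\mathcal A^*}\le\|\sum_i c_i(\chi_i\otimes\varphi_i)\|_{\ell^1(G,\mathcal A)^*}$ obtained by testing against unit-norm tensors $\delta_{x^{-1}}\otimes a$, exactly as in the analogous chain of inequalities in the proof of Theorem \ref{t555}. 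Summing in $x$ then gives $\widehat\mu\in C_{BSE}(\Delta(\ell^1(G,\mathcal A)))$ with $\|\widehat\mu\|_{BSE}\le\|\mu\|_{1,BSE}$.

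For the reverse inclusion I would reproduce the Hahn--Banach scheme of Theorem \ref{t555}: letting $M$ denote the closed linear span of $\Delta(\mathcal A)$ in $\mathcal A^*$ and $\overline G$ the Bohr compactification of $G$, I would define the vector-valued trigonometric polynomials $f_{\psi,\lambda}(\gamma)=\sum_i c_i\overline{\gamma(\chi_i)}\varphi_i\in C(\overline G,M)$ and set $U(f_{\psi,\lambda})=\sum_i c_i\sigma(\chi_i\otimes\varphi_i)$ on their linear span. The BSE hypothesis on $\sigma$, together with the norm estimate of Theorem \ref{t555}, yield $\|U\|\le\|\sigma\|_{BSE}$, and Hahn--Banach combined with the duality $C(\overline G,M)^*=M_b(\overline G,M^*)$ produce a vector measure whose pullback along the natural embedding $G\hookrightarrow\overline G$ I would take to be the candidate $\mu:G\to\mathcal A^*$.

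The main obstacle I anticipate is showing that $\mu$ lands in $\ell^1(G,C_{BSE}(\Delta(\mathcal A)))$, rather than merely yielding a bounded $\mathcal A^*$-valued measure on $\overline G$. Here the discreteness of $G$ is decisive: each $x\in G$ is a clopen singleton in $G$, so I would recover the coordinate $\mu(x)$ as a Fourier-type coefficient read off from $\sigma$ via the identity $U(f_{\chi\otimes\varphi})=\sigma(\chi\otimes\varphi)$, and use the BSE bound on $\sigma$ together with the identification $\ell^1(G,\mathcal A)^*=\ell^\infty(G,\mathcal A^*)$ to control $\sum_{x\in G}\|\mu(x)\|_{BSE}$ by $\|\sigma\|_{BSE}$. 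Membership of each $\mu(x)$ in $C_{BSE}(\Delta(\mathcal A))$ is then immediate from the estimate $|\sum_i c_i\mu(x)(\varphi_i)|\le\|\mu(x)\|\,\|\sum_i c_i\varphi_i\|_{\mathcal A^*}$, and $\widehat\mu=\sigma$ holds by construction on the generating polynomials $f_{\chi\otimes\varphi}$, completing both inclusions.
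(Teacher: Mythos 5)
Your proposal is essentially the paper's own proof: the easy inclusion is obtained by the identical chain of estimates through the unit tensors $\delta_{x^{-1}}\otimes\mu(x)$ and the bound $\Vert\widehat\mu\Vert_{BSE}\leq\Vert\mu\Vert_{1}$, and the hard inclusion by the identical Hahn--Banach argument on the span of the $M$-valued trigonometric polynomials $f_{\psi,\lambda}$ inside $C(\overline G,M)$, using the duality $C(\overline G,M)^{*}=M_b(\overline G,M^{*})$ and then restricting the resulting measure to $G$. The one point worth flagging is the step you yourself single out as the main obstacle: your appeal to the fact that singletons are clopen \emph{in $G$} does not yield the summability $\sum_{x\in G}\Vert\mu(x)\Vert<\infty$, because the representing measure furnished by Hahn--Banach lives on the Bohr compactification $\overline G$, where (for $G$ infinite) the copy of $G$ is a dense subgroup whose points are not isolated, so the measure need not be a discrete measure carried by $G$; extracting an $\ell^{1}$ family on $G$ from it is precisely the content of the classical Bochner--Schoenberg--Eberlein theorem and requires using the continuity of $\sigma$ on the compact group $\widehat G$. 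The paper's proof passes over this same point without justification (it simply asserts that $\overline\mu\in M_b(\overline G,M^{*})$ gives $\mu\in\ell^{1}(G,M^{*})$), so your write-up is faithful to the paper here as well, but a complete argument would have to supply this step, for instance by reducing to the scalar BSE theorem componentwise.
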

\begin{proof}
Suppose that $ \sigma \in C_{BSE}  ( \Delta ( \ell^{1} (G , \mathcal{A} )))$, then for each $ \varphi_i \in \Delta ( \mathcal{A} ) $, $ \chi_i \in \widehat{G} $ and $ c_i \in \mathbb{C} $, put $ \psi  = ( \psi_i )$, $ \psi_i = \chi_i \otimes \varphi_i $ and $ \lambda := ( c_i ) $. Let $ H = ( \widehat{G} , \tau  ) $ and $ \overline{G} = \widehat{H}   $ be the Bohr compactification of $ G $. Define $ f_{\psi , \lambda}  : \overline{G} \longrightarrow M$ in which $ M = \overline{< \Delta ( \mathcal{A} )>}^{\Vert \cdot \Vert}  $ and 
$$  f_{\psi , \lambda}  ( \gamma ) : = \sum_{i=1}^{n}  c_i \gamma ( \chi_i ) \varphi_i .$$
Thus $  f_{\psi , \lambda} \in C ( \overline{G} , M )$. Now if $ T : < f_{\psi , \lambda} > \longrightarrow \mathbb{C} $ define by 
$$T( f_{\psi , \lambda} ) := \sum_{i=1}^{n}  c_i \sigma ( \chi_i \otimes \varphi_i ) , $$
then we have
\begin{equation}\label{ee200}
  \vert T ( f_{\psi , \lambda} )  \vert \leq \Vert \sigma \Vert_{BSE} \cdot \Vert \sum_{i=1}^{n}  c_i  ( \chi_i \otimes \varphi_i ) \Vert_{\ell^{1} ( G , \mathcal{A} )^{\star}}.
 \end{equation} 
We show that the linear mapping $ T $ is well-defined.
Let $  f_{\psi , \lambda} = 0 $. Then 
\begin{align*}
 0 &= \sum_{i=1}^{n} c_i \widehat{x} ( \chi_i ) \varphi_i (a) \quad ( x \in G )\\
 &=\sum_{i=1}^{n} c_i \chi_i ( x ) \varphi_i (a) \quad ( a \in \mathcal{A} ). 
\end{align*}
Therefore
\begin{align*}
\big\Vert \sum_{i=1}^{n} c_i ( \chi_i \otimes \varphi_i ) \big \Vert_{\ell^{1} ( G , \mathcal{A} )^{\star}} &=\sup \big\lbrace \big\vert \sum_{i=1}^{n} c_i \chi_i ( \varphi_i \circ f ) \big\vert  : \, \Vert f \Vert_{1 , \mathcal{A}} \leq 1 \big\rbrace\\
 &=  \sup \big\lbrace \big\vert \sum_{i=1}^{n}  \sum_{x \in G} c_i  \overline{\chi_i (x)} \varphi_i ( f(x) ) \big\vert : \,  \Vert f \Vert_{1 , \mathcal{A}} \leq 1 \big\rbrace \\
 &=0 .
\end{align*}
Now \eqref{ee200} implies that $ T( f_{\psi , \lambda} ) =0 $. By applying Hahn-Banach theorem, there exists $ \overline{T} \in C ( \overline{G} , M )^{\star}  = M_b ( \overline{G} , M^{\star} )$ such that $ \overline{T} $ be an extension of $ T $ and $ \Vert \overline{T} \Vert = \Vert T \Vert $. Suppose that $ \overline{\mu} \in M_b ( \overline{G} , M^{\star} ) $ such that
$$ \sum_{i=1}^{n} c_i  \sigma ( \chi_i \otimes \varphi_i ) = T ( f_{\psi , \lambda} ) = \overline{T} ( f _{\psi , \lambda})  = \overline{\mu} ( f_{\psi , \lambda}  ) = \int_{\overline{G}} f_{\psi , \lambda} ( \gamma ) d \overline{\mu} ( \gamma ) .$$
As a special case
\begin{align*}
\sigma ( \chi \otimes \varphi ) &= \int_{\overline{G}} f_{\chi \otimes \varphi} ( \gamma ) d \overline{\mu} ( \gamma )\\
&= \int_{G} f_{\chi \otimes \varphi} ( \widehat{x}) d \overline{\mu} (\widehat{x}  ) \\
&:=  \int_{G} \chi ( x ) \varphi d \mu (x)\\
&= \sum_{x \in G } \mu (x) ( \chi (x) \varphi )\\
&=  \sum_{x \in G } \chi (x) \mu (x) ( \varphi )\\
&= \int_{G} \overline{\chi (x)} \mu ( x^{-1} ) ( \varphi ) dx \\
&:=\widehat{\nu} ( \chi \otimes \varphi ) ,
\end{align*}
in which $\nu(x)=\mu ( x^{-1} )$. 
Therefore $ \sigma = \widehat{\nu} $. 
Now we show that the mapping $ x \longmapsto \mu ( x^{-1} )  $ is belongs to $ C_{BSE} ( \Delta ( \mathcal{A} ))$ and so $ \mu \in \ell^{1} \big( G , C_{BSE} ( \Delta ( \mathcal{A} ))\big) $. 
Since $ \overline{\mu}  \in M_b ( \overline{G} , M^{\star} ) $, $ \mu \in \ell^{1} ( G , M^{\star} ) $. Therefore
 $$ \Vert \mu \Vert_{1}  = \sum_{x \in G} \Vert \mu (x) \Vert < \infty.$$
Now consider $ \varphi_i \in \Delta ( \mathcal{A} ) $, $ c_i \in \mathbb{C} $ and $ x \in G $. So
\begin{align*}
\big\vert \sum_{i=1}^{n} c_i \mu ( x^{-1}) ( \varphi_i ) \big\vert &= \big\vert \mu ( x^{-1} ) (\sum_{i=1}^{n} c_i \varphi_i ) \big\vert \\
&\leq \Vert \mu ( x^{-1}) \Vert \cdot \Vert \sum_{i=1}^{n} c_i \varphi_i \Vert \\
&\leq \Vert \mu \Vert_1 \cdot \Vert \sum_{i=1}^{n} c_i \varphi_i \Vert.
\end{align*}
Therefore $ \mu \in \ell^{1} \big(G , C_{BSE} ( \Delta ( \mathcal{A}))\big) $ such that $ \sigma = \widehat{\mu} $. Consequently,
\begin{equation}\label{ee201}
 C_{BSE} \big( \Delta ( \ell^{1} ( G , \mathcal{A} ))\big) \subseteq \ell^{1} \big( G , C_{BSE} ( \Delta ( \mathcal{A} ))\big).  
\end{equation}
Conversely, let $ \mu \in \ell^{1} \big( G , C_{BSE} ( \Delta ( \mathcal{A} ))\big) $. For all $ \varphi_i  \in \Delta ( \mathcal{A} )$, $ \chi_i \in \widehat{G} $ and $ c_i \in \mathbb{C} $ we have
\begin{align*}
\big\vert \sum_{i=1}^{n}  c_i \widehat{\mu} ( \chi_i \otimes \varphi_i ) \big\vert &=  \big\vert \sum_{i=1}^{n}  c_i  \sum_{x \in G} \overline{\chi_i (x)} \mu (x) ( \varphi_i ) \big\vert\\
&= \big\vert  \sum_{x \in G} \big( \sum_{i=1}^{n}  c_i \overline{\chi_i (x)} \mu (x) ( \varphi_i ) \big) \big\vert \\
&\leq  \sum_{x \in G} \big\vert  \sum_{i=1}^{n}  \big(c_i \overline{\chi_i (x)}\big) \mu (x) ( \varphi_i )  \big\vert\\
&\leq \sum_{x \in G} \big\vert   ( \sum_{i=1}^{n}  c_i \big( \delta_{x^{-1}} \otimes \mu (x)\big) ( \chi_i \otimes \varphi_i ) \big\vert \\
&= \sum_{x \in G} \big\vert   \big( \delta_{x^{-1}} \otimes \mu (x)\big) \big( \sum_{i=1}^{n}  c_i  ( \chi_i \otimes \varphi_i ) \big)  \big\vert\\
&\leq  \sum_{x \in G} \Vert    \delta_{x^{-1}} \otimes \mu (x) \big\Vert \cdot \Vert   \sum_{i=1}^{n}  c_i  ( \chi_i \otimes \varphi_i ) \big\Vert_{\ell^{1} ( G , \mathcal{A} )^{\star}}\\
&= \sum_{x \in G} \Vert    \delta_{x^{-1}} \Vert \cdot \Vert\mu (x) \Vert_{BSE} \cdot \big\Vert \sum_{i=1}^{n}  c_i  ( \chi_i \otimes \varphi_i ) \big\Vert \\
&=  \sum_{x \in G} \Vert\mu (x) \Vert_{BSE} \cdot \big\Vert \sum_{i=1}^{n}  c_i  ( \chi_i \otimes \varphi_i ) \big\Vert \\
&\leq \big( \sum_{x \in G} \Vert\mu (x) \Vert \big) \big( \big\Vert \sum_{i=1}^{n}  c_i  ( \chi_i \otimes \varphi_i ) \big\Vert  \big)\\
&=\Vert \mu \Vert_1 \cdot \big\Vert \sum_{i=1}^{n}  c_i  ( \chi_i \otimes \varphi_i ) \big\Vert  .
\end{align*}
Thus $ \widehat{\mu} \in C_{BSE} \big( \Delta ( \ell^{1} ( G , \mathcal{A} ))\big).$ This implies that 
\begin{equation}\label{ee202}
\ell^{1} \big( G , C_{BSE} ( \Delta ( \mathcal{A} ))\widehat{\big)}   \subseteq C_{BSE} \big( \Delta ( \ell^{1} ( G , \mathcal{A} ))\big).
\end{equation}
By \eqref{ee201} and \eqref{ee202} we obtain
$$ C_{BSE} ( \Delta ( \ell^{1} ( G, \mathcal{A} ))) = \ell^{1} \big( G, C_{BSE} ( \Delta ( \mathcal{A} ))\big).  $$
\end{proof}

\begin{corollary}
Let	$ \mathcal{A} $ be an unital commutative and semisimple Banach algebra
 and $ G $ be an abelian discrete group. Then the following statements are equivalent.
\begin{enumerate}
\item[(i)] $\ell^{1} (G , \mathcal{A} ) $ is a BSE- algebra.
\item[(ii)] $ \mathcal{A} $ is a BSE-algebra.
\end{enumerate}
\end{corollary}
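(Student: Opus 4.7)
The plan is to mirror the proof of Theorem \ref{t100}, replacing $L^1$ by $\ell^1$ and substituting Theorem \ref{t55} for the role played there by Theorem \ref{t555}. A simplification in the discrete setting is that $\mathcal A$ being unital together with $G$ discrete makes $\delta_e \otimes 1_{\mathcal A}$ an identity for $\ell^1(G,\mathcal A)$, so $M(\ell^1(G,\mathcal A)) = \ell^1(G,\mathcal A)$ and in particular $\widehat{M(\ell^1(G,\mathcal A))} = \widehat{\ell^1(G,\mathcal A)}$.

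For (ii)$\Rightarrow$(i), I would first establish that $\ell^1(G,\mathcal A)$ has a bounded $\Delta$-weak approximate identity. The argument of Proposition \ref{p100} adapts verbatim: $\{\delta_e\}$ serves as a bounded $\Delta$-weak approximate identity for $\ell^1(G)$, and if $\{f_\beta\}$ is one for $\mathcal A$, the tensors $\{\delta_e \otimes f_\beta\}$ work for $\ell^1(G,\mathcal A)$. By \cite[Corollary 5]{Tak1} this yields $\widehat{\ell^1(G,\mathcal A)} \subseteq C_{BSE}(\Delta(\ell^1(G,\mathcal A)))$. For the reverse inclusion, Theorem \ref{t55} gives
\[
C_{BSE}(\Delta(\ell^1(G,\mathcal A))) = \widehat{\ell^1(G, C_{BSE}(\Delta(\mathcal A)))},
\]
and since $\mathcal A$ is BSE, unital, and semisimple, $C_{BSE}(\Delta(\mathcal A)) = \widehat{M(\mathcal A)} = \widehat{\mathcal A}$, so the right-hand side coincides with $\widehat{\ell^1(G,\mathcal A)}$.

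For (i)$\Rightarrow$(ii), the BSE-property of $\ell^1(G,\mathcal A)$ supplies a bounded $\Delta$-weak approximate identity; applying the map $\theta_\chi$ (with a fixed $\chi \in \widehat{G}$) as in Proposition \ref{p100} descends this to $\mathcal A$, hence $\widehat{M(\mathcal A)} \subseteq C_{BSE}(\Delta(\mathcal A))$. For the reverse inclusion, I would copy the second half of the proof of Theorem \ref{t100}. Given $\sigma \in C_{BSE}(\Delta(\mathcal A))$ and $a \in \mathcal A$, fix $\chi_0 \in \widehat{G}$ and take $f_0 := \delta_e$, so $\chi_0(f_0) = 1$. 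Define $\sigma_1(\chi \otimes \varphi) := \sigma(\varphi)\chi(f_0)$ on $\Delta(\ell^1(G,\mathcal A))$; testing linear combinations against elementary tensors $\delta_e \otimes a$ with $\|a\| \leq 1$ yields $\sigma_1 \in C_{BSE}(\Delta(\ell^1(G,\mathcal A)))$ with $\|\sigma_1\|_{BSE} \leq \|\sigma\|_{BSE}$. The BSE-assumption on $\ell^1(G,\mathcal A)$ then produces $g_0 = \sum_n g_n \otimes a_n \in \ell^1(G,\mathcal A)$ with $\widehat{g_0} = \sigma_1 \cdot \widehat{f_0 \otimes a}$, and evaluation at $\chi_0 \otimes \varphi$ shows that $b := \sum_n \chi_0(g_n)\,a_n \in \mathcal A$ satisfies $\sigma \widehat{a} = \widehat{b}$.

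The main point I expect to require care is the identification $\ell^1(G, C_{BSE}(\Delta(\mathcal A))) = \ell^1(G, \mathcal A)$ used in the forward direction: one must ensure that $\mathcal A$ and $C_{BSE}(\Delta(\mathcal A))$ coincide not merely as sets of functions but as Banach algebras (with equivalent norms), so that the corresponding $\ell^1$-constructions genuinely agree; this is where unitality plus the BSE hypothesis is essential. Everything else is routine transcription from Theorem \ref{t100}, made slightly cleaner by the normalization $\|\delta_e\|_1 = 1$.
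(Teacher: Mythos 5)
Your argument for (ii)$\Rightarrow$(i) is essentially the paper's: both identify $\widehat{M(\ell^1(G,\mathcal A))}$ with $\widehat{\ell^1(G,\mathcal A)}$ via the identity $\delta_e\otimes 1_{\mathcal A}$, invoke Theorem \ref{t55}, and substitute $C_{BSE}(\Delta(\mathcal A))=\widehat{\mathcal A}\cong\mathcal A$. Where you genuinely diverge is (i)$\Rightarrow$(ii). The paper stays inside Theorem \ref{t55}: from $\ell^1(G,\mathcal A)^{\wedge}=\ell^1\big(G,C_{BSE}(\Delta(\mathcal A))\big)^{\wedge}$ it shows that $C_{BSE}(\Delta(\mathcal A))$ is semisimple (the evaluations $\widehat{\varphi}$, $\varphi\in\Delta(\mathcal A)$, are characters separating points), deduces $\ell^1(G,\widehat{\mathcal A})=\ell^1\big(G,C_{BSE}(\Delta(\mathcal A))\big)$ from injectivity of the Gelfand map, and then appeals to Johnson's cancellation theorem \cite[Theorem 4.3]{PH} to strip off the functor $\ell^1(G,\cdot)$ and conclude $\widehat{\mathcal A}=C_{BSE}(\Delta(\mathcal A))$. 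You instead transplant the multiplier argument from the second half of Theorem \ref{t100}: lift $\sigma\in C_{BSE}(\Delta(\mathcal A))$ to $\sigma_1$ on $\widehat{G}\times\Delta(\mathcal A)$ via $f_0=\delta_e$, use the BSE property of $\ell^1(G,\mathcal A)$ to realize $\sigma_1\,\widehat{f_0\otimes a}$ as some $\widehat{g_0}$, and evaluate at $\chi_0\otimes\varphi$ to produce $b$ with $\sigma\widehat{a}=\widehat{b}$, hence $\sigma\in\mathcal M(\mathcal A)=\widehat{M(\mathcal A)}$ by semisimplicity. Both routes are correct. The paper's buys symmetry with Theorem \ref{t55} at the cost of importing \cite[Theorem 4.3]{PH} and the semisimplicity digression; yours is self-contained for this direction (it does not even need Theorem \ref{t55} there) and sidesteps the very point you flag at the end, namely whether the two $\ell^1$-spaces agree as Banach algebras rather than merely after applying~$\wedge$. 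A minor simplification you could make explicit: since $\mathcal A$ is unital, $1_{\mathcal A}$ and $\delta_e\otimes 1_{\mathcal A}$ are genuine identities, so the approximate-identity bookkeeping via Proposition \ref{p100} and \cite[Corollary 5]{Tak1} is vacuous in both directions.
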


\begin{proof}
$ (i) \Rightarrow (ii) $ 
Since $ \ell^{1} (G, \mathcal{A} ) = \ell^{1} (G) \widehat{\otimes} \mathcal{A} $, $ \delta_e \otimes 1_{\mathcal{A}} $ is identity element of $ \ell^{1} ( G , \mathcal{A} ) $. Moreover, $ \ell^{1} ( G , \mathcal{A} ) $ is a commutative Banach algebra. Therefore
$$ M \big( \ell^{1} (G , \mathcal{A} )\big) = \lbrace  L_f  :\, f \in \ell^{1} (G , \mathcal{A})  \rbrace  $$
in which $ L_f (g) = f \star g $, for each $ g \in \ell^{1} (G , \mathcal{A} ) $. Also $ \ell^{1} (G , \mathcal{A}) $ is semisimple. Thus
\begin{align*}
M \big(\ell^{1} (G , \mathcal{A} )\widehat{\big)} &= \lbrace  \widehat{L_f} : \, f \in \ell^{1} ( G , \mathcal{A})  \rbrace \\
&= \lbrace  \widehat{f} : \, f \in \ell^{1} (G , \mathcal{A} )  \rbrace \\
&= \ell^{1} (G , \mathcal{A}\widehat{)}\\
&=C_{BSE} \big( \Delta ( \ell^{1} (G , \mathcal{A} ))\big).
\end{align*}
Consequently,
$$ \ell^{1} (G , \mathcal{A} \widehat{)} = \ell^{1} \big(G , C_{BSE} ( \Delta ( \mathcal{A} )) \widehat{\big)}. $$
Now, we prove that $ C_{BSE} ( \Delta ( \mathcal{A} ))  $ is semisimple. By the assumption, $ \mathcal{A} $ is semisimple. Take  $ \sigma_1 , \sigma_2 \in C_{BSE} ( \Delta ( \mathcal{A} )) $ with $ \sigma_1 \neq \sigma_2 $. Then there exists $ \varphi \in \Delta (\mathcal{A}) $ such that $ \sigma_1 ( \varphi ) \neq \sigma_2 ( \varphi )$. Therefore $ \widehat{\varphi} ( \sigma_1 ) \neq  \widehat{\varphi} ( \sigma_2 )$. But
$$ \widehat{\varphi} ( \xi_1 \cdot \xi_2 ) = \xi_1 \cdot \xi_2 ( \varphi ) = \xi_1 (\varphi ) \xi_2 (\varphi ) = \widehat{\varphi} (\xi_1 ) \widehat{\varphi} ( \xi_2 ),  $$
for each $ \xi_1 , \xi_2 \in C_{BSE} ( \Delta ( \mathcal{A} )) $. Hence $ \widehat{\varphi} $ is multiplication and so $ \widehat{\varphi} \in \Delta \big( C_{BSE} ( \Delta ( \mathcal{A} ))\big) $ and separate the points  $ \sigma_1 $ and $ \sigma_2 $. It follows that
 $ \ell^{1} \big( G , C_{BSE} ( \Delta ( \mathcal{A} ))\big) $ is semisimple. Thus the Gelfand mapping is injective and 
$$  \ell^{1} \big(G , \widehat{\mathcal{A}} \big) = \ell^{1} \big(G , C_{BSE} ( \Delta ( \mathcal{A} ))\big) .$$
By \cite[Theorem 4.3]{PH} , $ \widehat{\mathcal{A}}  = C_{BSE} \big( \Delta ( \mathcal{A} )\big)$ and so $ \mathcal{A} $ is a BSE-algebra.

$ (ii) \Rightarrow  (i) $
 Suppose that $ \mathcal{A} $ is a BSE-algebra. Then $ C_{BSE} \big( \Delta ( \mathcal{A} )\big) = \widehat{\mathcal{A}} $. Moreover, by the semisimplicity of $ \mathcal{A} $, $ \mathcal{A} = \widehat{\mathcal{A}} $ as two Banach algebra. Thus
\begin{align*}
C_{BSE} \big( \Delta ( \ell^{1} (G , \mathcal{A} ))\big) &= \ell^{1} \big( G , C_{BSE} ( \Delta ( \mathcal{A} )) \widehat{\big)} \\
&= \ell^{1} (G , \widehat{\mathcal{A}} \widehat{)}\\
&= \ell^{1} (G , \mathcal{A}\widehat{)}.
\end{align*}
Hence $ \ell^{1} (G , \mathcal{A}) $ is a BSE-algebra.
\end{proof}

\vspace{9mm}

{\footnotesize \noindent
	M. Amiri\\
	Department of Pure Mathematis\\
	Faculty of Mathematics and Statistics\\
	University of Isfahan\\
	Isfahan, 81746-73441\\
	Iran\\
	mitra75amiri@gmail.com\\
	m.amiri@sci.ui.ac.ir\\
	
	\noindent
	A. Rejali\\
	Department of Pure Mathematis\\
	Faculty of Mathematics and Statistics\\
	University of Isfahan\\
	Isfahan, 81746-73441\\
	Iran\\
	rejali@sci.ui.ac.ir\\
	a.rejali20201399@gmail.com

\end{document}